\newtheorem{thm}{Thm}[section]
\newtheorem{cor}[thm]{Corollary}
\newtheorem{lem}[thm]{Lemma}
\newtheorem{prop}[thm]{Proposition}
\theoremstyle{definition}
\newtheorem{defn}[thm]{Definition}
\theoremstyle{remark}
\numberwithin{equation}{section}
\newcommand{\addsupport}[1]{} 
\newcommand{\bsm}{\left(\begin{smallmatrix}}
\newcommand{\esm}{\end{smallmatrix}\right)}
\begin{document}

\title{ MORE PROPERTIES OF THE INCOMPLETE GAMMA FUNCTIONS}
\author{R. AlAhmad\\ Mathematics Department, Yarmouk University, \\Irbid, Jordan 21163,\\ email:rami\_thenat@yu.edu.jo}
\date{\today}%
\maketitle
\begin{abstract}
In this paper, additional properties of the lower gamma functions and the error functions are introduced and proven. In particular, we prove interesting relations between the error functions and Laplace transform. \\
{\bf AMS Subject Classification}: 33B20\\
{\bf Key Words and Phrases}: Incomplete beta and gamma functions, error functions.
\end{abstract}
\section{Introduction}
\begin{defn}The lower incomplete gamma function is defined as:
\[ \gamma(s,x) = \int_0^x t^{s-1}e^{-t}dt .\]
\end{defn}
Clearly,  $\gamma(s,x)\longrightarrow\Gamma(s)$ as $x\longrightarrow\infty$.
The properties of these functions are listed in many references( for example see~\cite{tables},~\cite{Miller} and ~\cite{Tem}). In particular, the following properties are needed:
\begin{prop}\label{prop1}~\cite{ape}

\begin{enumerate}
\item $\gamma(s+1,x) =s\gamma(s,x) - x^{s} e^{-x},$
\item$\gamma(1,x) = 1 - e^{-x},$
\item$\gamma\left(\frac{1}{2}, x\right) = \sqrt\pi\operatorname{erf}\left(\sqrt x\right).$
\end{enumerate}
\end{prop}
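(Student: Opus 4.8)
The plan is to establish each of the three identities by direct integral manipulation, taking them in order, since part (2) falls out as a special case and part (3) reduces to a standard Gaussian integral.

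For part (1), I would start from the definition $\gamma(s+1,x) = \int_0^x t^s e^{-t}\,dt$ and integrate by parts, taking $u = t^s$ and $dv = e^{-t}\,dt$, so that $du = s\,t^{s-1}\,dt$ and $v = -e^{-t}$. This produces the boundary term $\left[-t^s e^{-t}\right]_0^x = -x^s e^{-x}$ together with the integral $s\int_0^x t^{s-1} e^{-t}\,dt = s\,\gamma(s,x)$, which is exactly the asserted recurrence. The one point deserving care is the vanishing of the boundary contribution at $t=0$: this holds provided $\Re s > 0$, so that $t^s e^{-t}\to 0$ as $t\to 0^+$, and that is precisely the range in which $\gamma(s,x)$ is defined.

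For part (2), I would simply set $s=1$ in the definition, giving $\gamma(1,x) = \int_0^x e^{-t}\,dt = 1 - e^{-x}$, a one-line evaluation of an elementary integral.

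For part (3), I would put $s=\tfrac12$ and make the substitution $t = u^2$, $dt = 2u\,du$, which turns $\gamma\!\left(\tfrac12,x\right) = \int_0^x t^{-1/2} e^{-t}\,dt$ into $2\int_0^{\sqrt x} e^{-u^2}\,du$, since $t^{-1/2}\,dt = 2\,du$. Recognizing the error function $\operatorname{erf}(z) = \tfrac{2}{\sqrt\pi}\int_0^z e^{-u^2}\,du$ then yields $\gamma\!\left(\tfrac12,x\right) = \sqrt\pi\,\operatorname{erf}\!\left(\sqrt x\right)$. None of these steps presents a genuine obstacle; the only items requiring attention are the convergence and boundary bookkeeping in part (1) and the correct matching of the normalizing constant $\tfrac{2}{\sqrt\pi}$ in the definition of $\operatorname{erf}$ used in part (3).
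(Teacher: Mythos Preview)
Your proof is correct and entirely standard: integration by parts for the recurrence, direct antiderivative for $s=1$, and the substitution $t=u^2$ to recover the error function. Note, however, that the paper does not actually prove this proposition; it is quoted from the literature (the citation to Apelblat) as a collection of known facts to be used later. So there is no proof in the paper to compare against, but your argument stands on its own and would serve perfectly well were one required.
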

\section{Main Results}
\begin{prop}\cite{ape}\label{ape}For $a<0$ and $ a+b>0$\[\int_{0}^{\infty}x^{a-1}\mathop{\gamma\/}\nolimits\!\left(b,x\right)dx=-\frac{%
\mathop{\Gamma\/}\nolimits\!\left(a+b\right)}{a}.\]
\end{prop}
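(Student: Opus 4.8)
The plan is to prove the identity by a single integration by parts, exploiting the fundamental fact that the lower incomplete gamma function is itself an antiderivative of the integrand defining it. Directly from the definition one has $\frac{d}{dx}\gamma(b,x) = x^{b-1}e^{-x}$. I would therefore set $u = \gamma(b,x)$ and $dv = x^{a-1}\,dx$, so that $du = x^{b-1}e^{-x}\,dx$ and $v = x^a/a$ (the choice of antiderivative is legitimate precisely because $a \neq 0$ by hypothesis). Integration by parts then gives
\[\int_0^\infty x^{a-1}\gamma(b,x)\,dx = \left[\frac{x^a}{a}\gamma(b,x)\right]_0^\infty - \frac{1}{a}\int_0^\infty x^{a+b-1}e^{-x}\,dx.\]
The second term is immediately recognizable: since $a+b>0$, the integral $\int_0^\infty x^{a+b-1}e^{-x}\,dx$ converges and equals $\Gamma(a+b)$ by the definition of the gamma function, producing the factor $-\Gamma(a+b)/a$ claimed in the statement.

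The main work, and the step where the two hypotheses $a<0$ and $a+b>0$ are genuinely used, is showing that the boundary term vanishes at both endpoints. At the upper limit I would invoke the fact noted in the introduction that $\gamma(b,x) \to \Gamma(b)$ as $x\to\infty$; hence $\frac{x^a}{a}\gamma(b,x)$ behaves like a constant multiple of $x^a$, which tends to $0$ because $a<0$. At the lower limit the estimate is more delicate: from the definition, for small $x$ one has $\gamma(b,x) = \int_0^x t^{b-1}e^{-t}\,dt \sim x^b/b$, so $\frac{x^a}{a}\gamma(b,x)$ is asymptotic to $x^{a+b}/(ab)$, which tends to $0$ precisely because $a+b>0$.

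I expect this lower-endpoint analysis to be the main obstacle, since it requires the small-$x$ asymptotics of $\gamma(b,x)$ together with the combined condition $a+b>0$; one must also note that $b>0$ (which follows from $a+b>0$ and $a<0$) so that $\gamma(b,x)$ is well defined near the origin and the above asymptotic is valid. Once both boundary contributions are confirmed to vanish, combining the two displays yields the identity. A closing remark I would make is that the hypotheses are sharp in the natural sense: $a<0$ secures convergence at infinity, while $a+b>0$ secures convergence at the origin and simultaneously guarantees that $\Gamma(a+b)$ is defined.
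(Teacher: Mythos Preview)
The paper does not supply its own proof of this proposition; it is quoted with a citation to Apelblat's table of integrals and then used as a known input in later arguments. There is therefore no proof in the paper to compare against.

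Your integration-by-parts argument is correct and is the standard derivation of this identity. Differentiating $\gamma(b,x)$ to obtain $x^{b-1}e^{-x}$, choosing $v=x^a/a$, and then recognizing the remaining integral as $\Gamma(a+b)$ is exactly right. Your boundary-term analysis is also sound: the condition $a<0$ kills the contribution at infinity since $\gamma(b,x)\to\Gamma(b)$, while $a+b>0$ (together with the implied $b>0$, which you correctly note) ensures $x^a\gamma(b,x)\sim x^{a+b}/b\to 0$ as $x\to 0^+$. The remark that the hypotheses are sharp for convergence at the two endpoints is accurate.
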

\begin{prop}\label{prop3} For $a\neq0$
$$\int_0^{\sqrt{t}} r^se^{-(ar)^2}dr=\frac{1}{2a^{s+1}}\gamma(\frac{s+1}{2},a^2t).$$
\end{prop}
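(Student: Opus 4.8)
The plan is to reduce the left-hand integral directly to the defining integral of the lower incomplete gamma function via a single change of variables, so that no machinery beyond the Definition is required. The natural substitution is $u=(ar)^2=a^2r^2$, chosen precisely so that the exponential $e^{-(ar)^2}$ becomes $e^{-u}$, matching the kernel in $\gamma(s,x)=\int_0^x t^{s-1}e^{-t}\,dt$.

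Carrying this out, I would first differentiate to get $du=2a^2r\,dr$, and update the limits: $r=0$ gives $u=0$, while $r=\sqrt t$ gives $u=a^2t$. Writing $r^s\,dr=r^{s-1}(r\,dr)$ lets me substitute $r\,dr=du/(2a^2)$ cleanly, and the remaining factor is handled by inverting the substitution as $r=u^{1/2}/a$, so that $r^{s-1}=u^{(s-1)/2}/a^{s-1}$. Collecting the constant prefactors yields $\frac{1}{a^{s-1}}\cdot\frac{1}{2a^2}=\frac{1}{2a^{s+1}}$, exactly the coefficient appearing in the statement.

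The last step is to recognize the transformed integrand as a gamma kernel. Rewriting the exponent as $\frac{s-1}{2}=\frac{s+1}{2}-1$ turns the remaining integral into $\int_0^{a^2t}u^{\frac{s+1}{2}-1}e^{-u}\,du$, which is precisely $\gamma\!\left(\frac{s+1}{2},a^2t\right)$ by the Definition, completing the argument.

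I expect the real points of care to be bookkeeping rather than a genuine obstacle: one must track the fractional powers so that the exponent lands exactly on $\frac{s+1}{2}-1$, and one should record the implicit restriction $s>-1$ (equivalently $\frac{s+1}{2}>0$), which guarantees convergence at the lower limit $r=0$ and makes $\gamma\!\left(\frac{s+1}{2},a^2t\right)$ well defined. The one substantive subtlety is the sign of $a$: since the left side depends only on $a^2$, while the factor $a^{s+1}$ on the right is not even in $a$, the inversion $r=u^{1/2}/a$ tacitly requires $a>0$ (otherwise one obtains $|a|^{s+1}$); I would therefore state the assumption $a>0$, or the choice of branch, explicitly at the outset.
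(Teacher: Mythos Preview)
Your proof is correct and follows essentially the same route as the paper: a single change of variables that turns the integral into the defining form of $\gamma\bigl(\tfrac{s+1}{2},a^2t\bigr)$. The paper writes the substitution as $u=r^2$ (with the rescaling by $a^2$ absorbed tacitly), whereas you use $u=(ar)^2$ directly; these amount to the same computation, and your remark that the identity as stated really requires $a>0$ (or $|a|^{s+1}$ on the right) is a sharper observation than what the paper records.
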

\begin{proof}The substitution $u=r^2$ gives\begin{align*}\int_0^{\sqrt{t}} r^se^{-(ar)^2}dr=\frac{1}{2a^{s+1}}\int_{0}^{a^2t} e^{-u} u^{\frac{s-1}{2}}du=\frac{1}{2a^{s+1}}\gamma(\frac{s+1}{2},a^2t).
\end{align*}
\end{proof}
\begin{prop}\label{main2}
$$ \eta^{x}\int_{0}^{\xi}t^{x-1}e^{-\eta t}dt =\gamma(x,\eta\xi)$$
\end{prop}

\begin{proof}\begin{align*}\gamma(x,\eta\xi)&=\int_{0}^{\eta\xi} e^{-t} t^{x-1}\\
&=\int_{0}^{\xi}e^{-\eta t}(\eta t)^{x-1}=\eta^{x}\int_{0}^{\xi}t^{x-1}e^{-\eta t}dt
\end{align*}
\end{proof}
The incomplete gamma function $\gamma(a,t)$ satisfies
\begin{lem}\label{main7} For $t\geq0$,
$$ \gamma(a,t)=\frac{2}{\Gamma(b)}\int_0^{\pi/2}\gamma(a+b,t\sec^2(\theta))\cos^{2a-1}(\theta)\sin^{2b-1}(\theta)d\theta.$$
\end{lem}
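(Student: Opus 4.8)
The plan is to start from the right-hand side, convert the inner incomplete gamma function into an ordinary integral by rescaling, interchange the order of integration, and recognise the resulting inner $\theta$-integral as a Gamma integral in disguise. Throughout I assume $a>0$ and $b>0$, which is what makes both $\gamma(a,t)$ and $\Gamma(b)$ meaningful.

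First I would apply Proposition \ref{main2} with exponent $a+b$ and scaling factor $\eta=\sec^2\theta$ (equivalently, substitute $u=s\sec^2\theta$ in $\gamma(a+b,t\sec^2\theta)=\int_0^{t\sec^2\theta}u^{a+b-1}e^{-u}\,du$). This yields
\[
\gamma(a+b,t\sec^2\theta)=(\sec^2\theta)^{a+b}\int_0^t s^{a+b-1}e^{-s\sec^2\theta}\,ds ,
\]
so that the right-hand side becomes a double integral carrying the weight $(\sec^2\theta)^{a+b}\cos^{2a-1}\theta\sin^{2b-1}\theta$. The key simplification is that this weight collapses, all dependence on $a$ cancelling:
\[
(\sec^2\theta)^{a+b}\cos^{2a-1}\theta\sin^{2b-1}\theta=\tan^{2b-1}\theta\,\sec^2\theta .
\]

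Next I would invoke Tonelli's theorem (the integrand is nonnegative) to interchange the $\theta$- and $s$-integrations, rewriting the right-hand side as
\[
\frac{2}{\Gamma(b)}\int_0^t s^{a+b-1}\Bigl(\int_0^{\pi/2}\tan^{2b-1}\theta\,\sec^2\theta\,e^{-s\sec^2\theta}\,d\theta\Bigr)\,ds .
\]
The substitution $w=\tan\theta$, for which $dw=\sec^2\theta\,d\theta$ and $\sec^2\theta=1+w^2$, turns the inner integral into $\int_0^\infty w^{2b-1}e^{-s(1+w^2)}\,dw=e^{-s}\int_0^\infty w^{2b-1}e^{-sw^2}\,dw$; a further substitution $v=sw^2$ (of the type used in Proposition \ref{prop3}) identifies this last integral as $\tfrac12\Gamma(b)s^{-b}$, so the inner integral equals $\tfrac12\Gamma(b)\,s^{-b}e^{-s}$.

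Finally, substituting this value back, the constants $2/\Gamma(b)$ and $\tfrac12\Gamma(b)$ cancel and the factor $s^{-b}$ reduces $s^{a+b-1}$ to $s^{a-1}$, leaving exactly $\int_0^t s^{a-1}e^{-s}\,ds=\gamma(a,t)$, which is the left-hand side. The only genuinely delicate point is the evaluation of the inner $\theta$-integral, where the real content of the identity lives: the cancellation of every $(1+w^2)$ factor after $w=\tan\theta$, so that a pure Gamma integral survives, is what I expect to be the crux, together with checking that the integrals converge well enough near $\theta=\pi/2$ (i.e.\ as $w\to\infty$) to justify the interchange of order.
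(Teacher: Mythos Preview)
Your proof is correct but proceeds quite differently from the paper's. The paper argues in the opposite direction: it writes $\gamma(a,t)\Gamma(b)$ as the double integral $\int_0^t\int_0^\infty e^{-z}z^{a-1}e^{-v}v^{b-1}\,dv\,dz$, substitutes $z=x^2$, $v=y^2$ to obtain an integral over the half-strip $0<x<\sqrt t$, $y>0$, and then passes to polar coordinates; the strip becomes $0<r<\sqrt t\sec\theta$, $0<\theta<\pi/2$, and Proposition~\ref{prop3} identifies the radial integral as $\tfrac12\gamma(a+b,t\sec^2\theta)$. You instead start from the right-hand side, rescale via Proposition~\ref{main2}, and reduce the inner $\theta$-integral by the one-dimensional substitution $w=\tan\theta$, exploiting the neat collapse of the weight to $\tan^{2b-1}\theta\sec^2\theta$. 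The paper's route is more conceptual---it is visibly the ``incomplete'' version of the classical polar-coordinates proof that $\beta(a,b)=\Gamma(a)\Gamma(b)/\Gamma(a+b)$, which is why the limit $t\to\infty$ recovers~\eqref{known} so transparently---while yours is more computational but entirely self-contained, avoids any two-dimensional change of variables, and makes the appeal to Tonelli explicit.
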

\begin{proof} The strip $\{(x,y):0< x< \sqrt{t}, y> 0\}$ is mapped to $\{(r,\theta):0<r<\sqrt{t}\sec(\theta),0<\theta<\pi/2\}$, where $x=r\cos(\theta)$ and $y=r\sin(\theta)$.

Now, Using the substitutions $z=x^2$ and $v=y^2$, we get
$$ 4\int_{0}^{\sqrt{t}}\int_{0}^{\infty} e^{-{x^2}} x^{2a-1}e^{-{y^2}}y^{2b-1}dydx=\int_{0}^{t}\int_{0}^{\infty} e^{-z} z^{a-1}e^{-v}v^{b-1}dvdz. $$ This result gives
\begin{align}\label{Eq2}\gamma(a,t)\Gamma(b)&=\int_{0}^{t}\int_{0}^{\infty} e^{-z} z^{a-1}e^{-v}v^{b-1}dvdz\nonumber \\
&= 4\int_{0}^{\sqrt{t}}\int_{0}^{\infty} e^{-{x^2}} x^{2a-1}e^{-{y^2}}y^{2b-1}dydx\nonumber \\
&=4\int_0^{\pi/2}\int_0^{\sqrt{t}\sec(\theta)}\cos^{2a-1}(\theta)\sin^{2b-1}(\theta)e^{-r^2}r^{2a+2b-1}drd\theta.\end{align}
Using Proposition~\ref{prop3} and Equation~\ref{Eq2} we get
  $$ \gamma(a,t)\Gamma(b)=2\int_0^{\pi/2}\gamma(a+b,t\sec^2(\theta))\cos^{2a-1}(\theta)\sin^{2b-1}(\theta)d\theta.$$
  \end{proof}

As a remark, in Lemma~\ref{main7}, if we let $t\longrightarrow\infty$ then $\gamma(a,t)\longrightarrow\Gamma(a)$ and $\gamma(a+b,t\sec^2(\theta))\longrightarrow \Gamma(a+b)$. This proves the well-known result that is\begin{equation}\label{known}\beta(a,b)=\frac{\Gamma(a)\Gamma(b)}{\Gamma(a+b)}=2\int_0^{\pi/2}\cos^{2a-1}(\theta)\sin^{2b-1}(\theta)d\theta.\end{equation}
As a manner of fact, one can easily show that

 It is Known that the gamma function satisfies the property
\begin{prop} The gamma function $\Gamma$ satisfies
$$\Gamma(1-a) \Gamma(a) = \pi \csc(\pi a).$$
\end{prop}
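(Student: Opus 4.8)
The plan is to deduce the reflection formula from the Beta-function identity~\eqref{known} that was just established. Setting $b=1-a$ (and assuming for the moment $0<a<1$, so that every integral in sight converges) and using $\Gamma(1)=1$, the identity $\beta(a,b)=\Gamma(a)\Gamma(b)/\Gamma(a+b)$ collapses to
$$\Gamma(a)\Gamma(1-a)=\beta(a,1-a).$$
Thus the entire problem reduces to the evaluation of a single Beta integral, which is a natural continuation of the circle of ideas already developed.

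First I would pass from the trigonometric form of the Beta function back to its standard form $\beta(a,1-a)=\int_0^1 t^{a-1}(1-t)^{-a}\,dt$ and apply the substitution $t=u/(1+u)$, which maps $[0,1)$ onto $[0,\infty)$ and turns the integral into
$$\beta(a,1-a)=\int_0^\infty \frac{u^{a-1}}{1+u}\,du.$$
It then remains only to show that this last integral equals $\pi\csc(\pi a)$.

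The key step, and the main obstacle, is the evaluation of $I:=\int_0^\infty u^{a-1}/(1+u)\,du$. The cleanest route is a keyhole contour integration of $f(z)=z^{a-1}/(1+z)$ with branch cut along the positive real axis, so that $\arg z$ ranges over $(0,2\pi)$. The only singularity enclosed is the simple pole at $z=-1=e^{i\pi}$, whose residue is $e^{i\pi(a-1)}=-e^{i\pi a}$. The two straight portions of the keyhole, just above and just below the cut, contribute $I$ and $-e^{2\pi i a}I$ respectively, while the large and small circular arcs vanish as the radii tend to $\infty$ and $0$; this vanishing is exactly where the hypotheses $0<a<1$ are needed. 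The residue theorem then gives $(e^{2\pi i a}-1)I=2\pi i\,e^{i\pi a}$, and factoring $e^{2\pi i a}-1=2i\,e^{i\pi a}\sin(\pi a)$ yields $I=\pi/\sin(\pi a)$, that is, $\beta(a,1-a)=\pi\csc(\pi a)$.

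Combining this evaluation with the reduction above proves the formula for $0<a<1$. The general case follows by analytic continuation: both $\Gamma(a)\Gamma(1-a)$ and $\pi\csc(\pi a)$ are meromorphic in $a$ and coincide on the interval $(0,1)$, hence they agree wherever both are defined. The anticipated difficulty is entirely concentrated in the contour argument (the branch-cut bookkeeping and the arc estimates); once $I$ is computed, the rest is formal.
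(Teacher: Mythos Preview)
Your argument is correct and is in fact the classical derivation of Euler's reflection formula from the Beta integral via a keyhole contour; the reduction, the residue calculation, and the analytic continuation are all sound.

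However, there is no ``paper's own proof'' to compare against here: the paper states this proposition as a known fact (``It is Known that the gamma function satisfies the property\ldots'') and offers no proof whatsoever. It is simply quoted as background before the authors present what they call an extension of it in the next lemma. So you have supplied a full proof where the paper supplies none. Your choice to lean on the Beta identity~\eqref{known} that was just derived is a natural way to tie the result into the paper's narrative, but be aware that this is your own addition rather than a reconstruction of anything in the text.
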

The following is an extension of this result
\begin{lem}\label{mainr1}
For $t\geq0$,
\begin{align*}
\gamma(a,t)\Gamma(1-a)&=2\int_0^{\pi/2}(1-e^{-t\sec^2(\theta)})\cot^{2a-1}(\theta)d\theta\\=&\pi\csc(\pi a)-2\int_0^{\pi/2}e^{-t\sec^2(\theta)}\cot^{2a-1}(\theta)d\theta.\end{align*}
\end{lem}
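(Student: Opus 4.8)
The plan is to specialize Lemma~\ref{main7} to the case $b=1-a$ and then invoke the elementary evaluation $\gamma(1,x)=1-e^{-x}$ from Proposition~\ref{prop1}. Throughout I would work under the assumption $0<a<1$, which is exactly the range in which $\gamma(a,t)$, $\Gamma(1-a)$, and all the trigonometric integrals below converge; the boundary case $t=0$ is trivial, since both sides of the claimed identity then vanish.

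First I would put $b=1-a$ in the identity of Lemma~\ref{main7}, giving
$$\gamma(a,t)\Gamma(1-a)=2\int_0^{\pi/2}\gamma(1,t\sec^2(\theta))\cos^{2a-1}(\theta)\sin^{1-2a}(\theta)\,d\theta,$$
where I have used $a+b=1$ and $2b-1=1-2a$. Since $\cos^{2a-1}(\theta)\sin^{1-2a}(\theta)=\cot^{2a-1}(\theta)$ and, by Proposition~\ref{prop1}(2), $\gamma(1,t\sec^2(\theta))=1-e^{-t\sec^2(\theta)}$, this collapses directly to the first claimed equality.

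To obtain the second form I would split the integrand into the constant $1$ and the exponential $e^{-t\sec^2(\theta)}$, treating the two pieces separately. The pure cotangent integral is precisely the Beta integral~\eqref{known} with $b=1-a$:
$$2\int_0^{\pi/2}\cot^{2a-1}(\theta)\,d\theta=2\int_0^{\pi/2}\cos^{2a-1}(\theta)\sin^{2(1-a)-1}(\theta)\,d\theta=\beta(a,1-a)=\Gamma(a)\Gamma(1-a),$$
and the reflection formula $\Gamma(a)\Gamma(1-a)=\pi\csc(\pi a)$ (the Proposition immediately preceding this statement) turns the right-hand side into $\pi\csc(\pi a)$. Subtracting the remaining exponential integral then yields the second equality.

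The only genuine obstacle is convergence bookkeeping rather than any deep idea. Near $\theta=0$ one has $\cot^{2a-1}(\theta)\sim\theta^{1-2a}$, integrable precisely when $a<1$, while near $\theta=\pi/2$ one has $\cot^{2a-1}(\theta)\sim(\pi/2-\theta)^{2a-1}$, integrable precisely when $a>0$; this pins down the restriction $0<a<1$. I would also verify that for $t>0$ the factor $e^{-t\sec^2(\theta)}$ decays fast enough as $\theta\to\pi/2$ for the exponential integral to converge on its own, which is what justifies splitting the integral into two convergent pieces. These checks are routine, and with them in place both displayed equalities follow at once from Lemma~\ref{main7}, Proposition~\ref{prop1}, the Beta integral~\eqref{known}, and the reflection formula.
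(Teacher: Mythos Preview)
Your argument is correct and follows exactly the paper's route: specialize Lemma~\ref{main7} to $b=1-a$ and invoke $\gamma(1,x)=1-e^{-x}$ from Proposition~\ref{prop1}. Your treatment is in fact more complete than the paper's, since you spell out the derivation of the second displayed equality via the Beta integral~\eqref{known} and the reflection formula, and you supply the convergence bookkeeping pinning down $0<a<1$; the paper leaves both of these implicit.
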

\begin{proof}
Proposition~\ref{prop1} implies \begin{equation}\label{main8}\gamma(1,t\sec^2(\theta))=1-e^{-t\sec^2(\theta)}.\end{equation} Then Lemma~\ref{main7} with $b=1-a$ gives the result.
\end{proof}
This lemma gives the following
\begin{prop} For $-1<a<0$,\[\int_0^\infty\frac{1-e^{-t}}{t^{1-a}}dt=-\frac{\Gamma(a+1)}{a}.\]

\end{prop}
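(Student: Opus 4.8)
The plan is to read the claimed identity as the $b=1$ instance of Proposition~\ref{ape}. In that proposition the hypotheses $a<0$ and $a+b>0$ specialize, upon setting $b=1$, to $a<0$ and $a+1>0$; together these say exactly $-1<a<0$, the range asserted here. So no new integration is required: the whole task is to match the two formulas (the dummy variable $x$ of Proposition~\ref{ape} playing the role of $t$).

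To carry this out, I would first invoke Proposition~\ref{prop1}(2), which gives the closed form $\gamma(1,x)=1-e^{-x}$, and rewrite the weight $x^{a-1}$ as $1/x^{1-a}$. Substituting $b=1$ into Proposition~\ref{ape} then yields
\[\int_0^\infty x^{a-1}\gamma(1,x)\,dx=\int_0^\infty\frac{1-e^{-x}}{x^{1-a}}\,dx=-\frac{\Gamma(a+1)}{a},\]
which is precisely the statement. Thus the proposition is an immediate corollary, and this is the shortest route.

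If instead one wants a self-contained argument not relying on Proposition~\ref{ape}, I would integrate by parts in $\int_0^\infty(1-e^{-t})t^{a-1}\,dt$ with $u=1-e^{-t}$ and $dv=t^{a-1}\,dt$, so that $v=t^a/a$. The boundary term $\big[t^a(1-e^{-t})/a\big]_0^\infty$ vanishes at both ends for $-1<a<0$ (near $0$ one has $1-e^{-t}\sim t$, giving $t^{a+1}\to0$ since $a+1>0$; near $\infty$ one has $t^a\to0$ since $a<0$), leaving $-\tfrac1a\int_0^\infty t^a e^{-t}\,dt=-\Gamma(a+1)/a$ by the definition of the gamma function.

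The computation itself is routine; the one genuine point to verify — and the reason the interval is exactly $-1<a<0$ — is convergence of the improper integral at both endpoints. Since $1-e^{-t}\sim t$ as $t\to0^+$, the integrand behaves like $t^a$ there, which is integrable iff $a>-1$; and since $1-e^{-t}\to1$ as $t\to\infty$, the integrand behaves like $t^{a-1}$, integrable iff $a<0$. These are precisely the stated constraints, so the main obstacle is really just this bookkeeping rather than any hard estimate.
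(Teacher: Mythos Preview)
Your proposal is correct. Both routes you give --- specializing Proposition~\ref{ape} to $b=1$ together with $\gamma(1,x)=1-e^{-x}$, and the direct integration by parts --- are valid and complete, and your endpoint analysis correctly explains why the hypothesis $-1<a<0$ is exactly what is needed.

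The paper, however, takes a markedly more circuitous path. It also starts from Proposition~\ref{ape}, but keeps $b$ general: it multiplies both sides by $\Gamma(1-b)$, replaces the product $\gamma(b,x)\Gamma(1-b)$ by the angular integral of Lemma~\ref{mainr1}, interchanges the order of integration, substitutes $u=t\sec^2\theta$ to separate the $u$- and $\theta$-integrals, evaluates the resulting $\theta$-integral via the beta identity~\eqref{known} as $\Gamma(a+b)\Gamma(1-b)/\Gamma(a+1)$, and then cancels the common factor $\Gamma(1-b)\Gamma(a+b)$ from both sides. After all this the $b$-dependence disappears entirely, and what remains is precisely your one-line specialization. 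So your first argument is a strict shortcut of the paper's proof, and your second (integration by parts) has the additional virtue of being self-contained, not relying on the cited Proposition~\ref{ape} at all. The paper's detour through Lemma~\ref{mainr1} and the beta integral buys nothing extra here; it seems designed to showcase those earlier results rather than to give the shortest derivation.
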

\begin{proof}Using Lemma~\ref{ape} we get\[\int_{0}^{\infty}x^{a-1}\mathop{\gamma\/}\nolimits\!\left(b,x\right)dx=-\frac{%
\mathop{\Gamma\/}\nolimits\!\left(a+b\right)}{a}.\]
Multiply both sides by $\Gamma(1-b)$ and use Lemma~\ref{mainr1} to get
\[-\frac{%
\Gamma(1-b)\mathop{\Gamma\/}\nolimits\!\left(a+b\right)}{a}=2\int_0^\infty\int_0^{\pi/2}(1-e^{-t\sec^2(\theta)})t^{a-1}\cot^{2a-1}(\theta)d\theta dt.\]
Interchange the order of the integral and use the substitution $u=t\sec^2(\theta)$ to get:
\[-\frac{%
\Gamma(1-b)\mathop{\Gamma\/}\nolimits\!\left(a+b\right)}{a}=\left(\int_0^\infty\frac{1-e^{-u}}{u^{1-a}}du\right)\left(2\int_0^{\pi/2}\cos^{2a+2b-1}(\theta)\sin^{1-2b}(\theta)d\theta d\theta\right).\]
Use Equation~\ref{known} to get the result.
\end{proof}

\begin{cor}
The error function satisfies $$\operatorname{erf}(\sqrt{at})=\frac{2}{\sqrt{\pi}\Gamma(b)}\int_{0}^{\pi/2}\gamma(\frac{1}{2}+b,at\sec^2(\theta))\sin^{2b-1}(\theta)d\theta.$$In particular,$$\operatorname{erf}(\sqrt{at})=1-\frac{2}{\pi}\int_{0}^{\pi/2}e^{-at\sec^2(\theta)}d\theta.$$
\end{cor}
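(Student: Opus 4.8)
The plan is to obtain both identities as direct specializations of Lemma~\ref{main7}, powered by the two elementary evaluations recorded in Proposition~\ref{prop1}. The natural starting point is part~(3) of Proposition~\ref{prop1}, which I would rewrite as $\operatorname{erf}(\sqrt{at})=\frac{1}{\sqrt\pi}\gamma(\frac12,at)$; this reduces the whole problem to understanding $\gamma(\frac12,at)$.

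Next I would invoke Lemma~\ref{main7} with its first parameter set equal to $\frac12$ and its variable replaced by $at$ (the lemma holds for every nonnegative argument, so this substitution is legitimate). The decisive simplification is that the exponent $2a-1$ on $\cos\theta$ becomes $0$ when the first parameter is $\frac12$, so the cosine factor disappears and the lemma collapses to $\gamma(\frac12,at)=\frac{2}{\Gamma(b)}\int_0^{\pi/2}\gamma(\frac12+b,at\sec^2(\theta))\sin^{2b-1}(\theta)\,d\theta$. Substituting this into the expression for $\operatorname{erf}(\sqrt{at})$ from the first step yields the general identity at once.

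For the ``in particular'' formula I would specialize the free parameter to $b=\frac12$. Then $\Gamma(b)=\Gamma(\frac12)=\sqrt\pi$, the sine factor again reduces to $1$ since $2b-1=0$, and the incomplete gamma function inside the integral becomes $\gamma(1,at\sec^2(\theta))$, which by part~(2) of Proposition~\ref{prop1} equals $1-e^{-at\sec^2(\theta)}$. Splitting the resulting integral and using $\int_0^{\pi/2}d\theta=\frac{\pi}{2}$ produces the constant term $1$ and leaves exactly the stated exponential integral.

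There is no serious obstacle here; the argument is a clean specialization rather than a computation. The only point demanding care is the notational clash between the constant $a$ appearing in $\operatorname{erf}(\sqrt{at})$ and the dummy first parameter of Lemma~\ref{main7}: one must fix the lemma's parameter to the numerical value $\frac12$ while letting the corollary's constant $a$ ride along inside the variable $at$. Once that bookkeeping is kept straight, both displays follow in a couple of lines.
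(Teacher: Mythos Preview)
Your proposal is correct and follows the same route as the paper: apply Lemma~\ref{main7} with first parameter $\tfrac12$ and variable $at$, then specialize the free parameter $b$ and use Proposition~\ref{prop1}(2). Your choice $b=\tfrac12$ for the ``in particular'' identity is in fact the one that works---the paper writes ``$b=1$'' there, but that is a slip, since it is $b=\tfrac12$ that produces the prefactor $\tfrac{2}{\pi}$, kills the sine factor, and turns the integrand into $\gamma(1,at\sec^2\theta)=1-e^{-at\sec^2\theta}$.
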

\begin{proof}By Lemma~\ref{main7} with $a=1/2$ and Proposition~\ref{prop1}, we get
$$ \sqrt{\pi}\operatorname{erf}(\sqrt{at})=\gamma(1/2,at)=\frac{2}{\Gamma(b)}\int_0^{\pi/2}\gamma(\frac{1}{2}+b,at\sec^2(\theta))\sin^{2b-1}(\theta)d\theta.$$
Therefore, $$\operatorname{erf}(\sqrt{at})=\frac{2}{\sqrt{\pi}\Gamma(b)}\int_0^{\pi/2}\gamma(\frac{1}{2}+b,at\sec^2(\theta))\sin^{2b-1}(\theta)d\theta.$$

The last equation with $b=1$ gives
$$\operatorname{erf}(\sqrt{at})=\frac{2}{\pi}\int_0^{\pi/2}\gamma(1,at\sec^2(\theta))d\theta.$$
Using Equation~\ref{main8} implies
  $$\operatorname{erf}(\sqrt{at})=\frac{2}{\pi}\int_0^{\pi/2}\gamma(1,at\sec^2(\theta))d\theta=1-\frac{2}{\pi}\int_{0}^{\pi/2}e^{-at\sec^2(\theta)}d\theta.$$
i.e., \begin{equation}\label{main9}\operatorname{erfc}(\sqrt{at})=1-\operatorname{erf}(\sqrt{at})=\frac{2}{\pi}\int_{0}^{\pi/2}e^{-at\sec^2(\theta)}d\theta.\end{equation}
\end{proof}
Multiplying Equation~\ref{main9} by $e^{at}$ implies
\begin{equation}\label{main91}e^{at}\operatorname{erfc}(\sqrt{at})=\frac{2}{\pi}\int_{0}^{\pi/2}e^{-at\tan^2(\theta)}d\theta.\end{equation}
Equation~\ref{main9} gives the following results
\begin{lem}\label{main10}
\begin{equation}\label{main101}\int_0^\infty f(t) \operatorname{erfc}(a\sqrt{t})dt
=\frac{2}{\pi}\int_0^{\pi/2}F(a^2\sec^2(\theta))d\theta=
\frac{2a}{\pi}\int_a^\infty\frac{F(s^2)}{s\sqrt{s^2-a^2}}ds,\end{equation} where $F(s)=(\mathcal{L}f)(s)$ is the Laplace transform of $f(t)$.
\end{lem}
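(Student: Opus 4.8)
The plan is to substitute the integral representation of the complementary error function from Equation~\eqref{main9} into the left-hand side and then interchange the order of integration. First I would rewrite Equation~\eqref{main9} with $a$ replaced by $a^2$ to obtain $\operatorname{erfc}(a\sqrt{t})=\frac{2}{\pi}\int_0^{\pi/2}e^{-a^2t\sec^2(\theta)}d\theta$ (for $a>0$), which matches exactly the argument $a\sqrt t$ appearing in the statement.

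Substituting this into $\int_0^\infty f(t)\operatorname{erfc}(a\sqrt{t})\,dt$ and pulling the constant out gives the double integral $\frac{2}{\pi}\int_0^\infty\int_0^{\pi/2}f(t)e^{-a^2t\sec^2(\theta)}\,d\theta\,dt$. Interchanging the order of integration by Fubini, the inner integral over $t$ becomes $\int_0^\infty f(t)e^{-a^2t\sec^2(\theta)}\,dt=F(a^2\sec^2(\theta))$, which is precisely the Laplace transform of $f$ evaluated at $s=a^2\sec^2(\theta)$. This yields the first claimed equality $\frac{2}{\pi}\int_0^{\pi/2}F(a^2\sec^2(\theta))\,d\theta$.

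For the second equality I would change variables via $s=a\sec(\theta)$, so that $s^2=a^2\sec^2(\theta)$ and the limits $\theta=0,\pi/2$ become $s=a,\infty$. A short computation gives $ds=a\sec(\theta)\tan(\theta)\,d\theta$, and using $\sec(\theta)=s/a$ together with $\tan(\theta)=\sqrt{s^2-a^2}/a$ one finds $d\theta=\frac{a}{s\sqrt{s^2-a^2}}\,ds$. Since $F(a^2\sec^2(\theta))=F(s^2)$, substitution turns $\frac{2}{\pi}\int_0^{\pi/2}F(a^2\sec^2(\theta))\,d\theta$ into $\frac{2a}{\pi}\int_a^\infty\frac{F(s^2)}{s\sqrt{s^2-a^2}}\,ds$, as required.

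The main obstacle is the rigorous justification of the Fubini interchange: one needs enough integrability of $f$, for instance joint absolute integrability of $f(t)e^{-a^2t\sec^2(\theta)}$ in $(t,\theta)$, equivalently convergence of the Laplace transform on $[a^2,\infty)$. Note that the exponential decays fastest at $\theta=0$ and the weight $\sec^2(\theta)\to\infty$ as $\theta\to\pi/2$, so the integrand is well-behaved in $\theta$; the real requirement is the existence of $F$ on the ray $[a^2,\infty)$, which I would record as a standing hypothesis on $f$.
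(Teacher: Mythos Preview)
Your argument is correct and follows essentially the same route as the paper: insert the integral representation \eqref{main9} for $\operatorname{erfc}(a\sqrt{t})$, swap the order of integration to recognize the Laplace transform $F(a^2\sec^2\theta)$, and then change variables to obtain the $s$-integral. Your substitution $s=a\sec\theta$ is the right one (the paper records it as $s=a\sec^2\theta$, which appears to be a typographical slip), and your added remark on the Fubini hypothesis is a welcome refinement that the paper leaves implicit.
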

\begin{proof}
Equation~\ref{main9} implies
$$ \int_0^\infty f(t)\operatorname{erfc}(a\sqrt{t})dt=\frac{2}{\pi}\int_0^\infty\int_{0}^{\pi/2} f(t) e^{-ta^2\sec^2(\theta)}d\theta dt.$$ By interchanging the order of the integral, we get:
  \begin{align}\label{main13} \int_0^\infty f(t)\operatorname{erfc}(a\sqrt{t})dt=\frac{2}{\pi}\int_{0}^{\pi/2}\int_0^\infty f(t) e^{-ta^2\sec^2(\theta)}dt d\theta .\end{align} Now, Since the Laplace transform is given as $F(s)=(\mathcal{L}f)(s)= \int_0^\infty f(t) e^{-st}dt$, then
  \begin{equation}\label{main14}\int_0^\infty f(t) e^{-ta^2\sec^2(\theta)}dt=F(a^2\sec^2(\theta)).\end{equation}
    Then Equations~\ref{main13} and~\ref{main14} prove the result that $$\int_0^\infty f(t) \operatorname{erfc}(a\sqrt{t})dt=\frac{2}{\pi}\int_0^{\pi/2}F(a^2\sec^2(\theta))d\theta.$$ In addition, the substitution $s=a\sec^2\theta$ implies that $$\frac{2}{\pi}\int_0^{\pi/2}F(a^2\sec^2(\theta))d\theta=\frac{2a}{\pi}\int_a^\infty\frac{F(s^2)}{s\sqrt{s^2-a^2}}ds.$$

\end{proof}
\begin{cor} For $r>-1$,
 \[\int_0^\infty t^r \operatorname{erfc}(a\sqrt{t})dt=\frac{\Gamma(r+\frac{3}{2})}{a^{2r+2}\sqrt{\pi}(1+r)}=\frac{2a\Gamma(r+1)}{\pi}\int_a^\infty\frac{1}{s^{2r+3}\sqrt{s^2-a^2}}ds.\] Moreover, for $ \mu>-1$
\[\int_0^\infty t^\mu \operatorname{erfc}(at)dt=\frac{1}{\sqrt{\pi}}\frac{\Gamma(1+\frac{\mu}{2})}{a^{\mu+1}(1+\mu)}.\]
\end{cor}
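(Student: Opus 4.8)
The plan is to apply Lemma~\ref{main10} directly with $f(t)=t^r$. For $r>-1$ the Laplace transform of $t^r$ is the standard formula $F(s)=(\mathcal{L}t^r)(s)=\Gamma(r+1)/s^{r+1}$, so that $F(s^2)=\Gamma(r+1)/s^{2r+2}$. Substituting this into the rightmost form of~\eqref{main101} gives the last expression immediately, namely $\frac{2a\Gamma(r+1)}{\pi}\int_a^\infty s^{-(2r+3)}(s^2-a^2)^{-1/2}\,ds$. Before invoking the lemma I would record that the integral on the left converges on the stated range: near $t=0$ the factor $\operatorname{erfc}(a\sqrt t)\to 1$ while $t^r$ is integrable for $r>-1$, and as $t\to\infty$ the Gaussian decay of $\operatorname{erfc}$ dominates every power of $t$.

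To produce the middle (closed-form) value I would instead use the other form of~\eqref{main101}, where $F(a^2\sec^2\theta)=\Gamma(r+1)a^{-(2r+2)}\cos^{2r+2}\theta$, giving
\[\int_0^\infty t^r\operatorname{erfc}(a\sqrt t)\,dt=\frac{2\Gamma(r+1)}{\pi a^{2r+2}}\int_0^{\pi/2}\cos^{2r+2}\theta\,d\theta.\]
The remaining trigonometric integral is a beta integral: applying~\eqref{known} with first parameter $r+\tfrac32$ and second parameter $\tfrac12$ yields $2\int_0^{\pi/2}\cos^{2r+2}\theta\,d\theta=\Gamma(r+\tfrac32)\Gamma(\tfrac12)/\Gamma(r+2)$. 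Using $\Gamma(\tfrac12)=\sqrt\pi$ and $\Gamma(r+2)=(r+1)\Gamma(r+1)$ and simplifying collapses the prefactor to $\Gamma(r+\tfrac32)/\bigl(\sqrt\pi\,a^{2r+2}(1+r)\bigr)$, which is the claimed quantity.

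For the second displayed formula, whose argument is $\operatorname{erfc}(at)$ rather than $\operatorname{erfc}(a\sqrt t)$, I would reduce to the first part through the substitution $\tau=t^2$, $dt=\tfrac12\tau^{-1/2}\,d\tau$. This converts $\int_0^\infty t^\mu\operatorname{erfc}(at)\,dt$ into $\tfrac12\int_0^\infty\tau^{(\mu-1)/2}\operatorname{erfc}(a\sqrt\tau)\,d\tau$, which is precisely the first part with $r=(\mu-1)/2$; the hypothesis $\mu>-1$ is exactly what guarantees $r>-1$. Substituting $r=(\mu-1)/2$ into the closed form, using $r+\tfrac32=1+\tfrac{\mu}{2}$, $2r+2=\mu+1$, and $1+r=(\mu+1)/2$, and accounting for the outer factor $\tfrac12$, produces $\Gamma(1+\tfrac{\mu}{2})/\bigl(\sqrt\pi\,a^{\mu+1}(1+\mu)\bigr)$.

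The computation is essentially mechanical once Lemma~\ref{main10} and the beta identity~\eqref{known} are in hand; there is no deep obstacle. The only points demanding care are verifying convergence of the left-hand integrals over the full parameter ranges $r>-1$ and $\mu>-1$ (in particular on the singular subrange $-1<r<0$, where $t^r$ blows up at the origin but remains integrable), and checking that the reduction $\tau=t^2$ maps the admissible range of $\mu$ onto the admissible range of $r$, which the arithmetic above confirms.
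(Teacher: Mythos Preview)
Your proof is correct and follows essentially the same route as the paper: apply Lemma~\ref{main10} with $f(t)=t^r$, evaluate the resulting $\int_0^{\pi/2}\cos^{2r+2}\theta\,d\theta$ via the beta identity~\eqref{known}, and then reduce the $\operatorname{erfc}(at)$ integral to the first part by the substitution $t\leftrightarrow\sqrt t$. The only cosmetic difference is that the paper first specialises to $a=1$ for the second formula and rescales at the end, whereas you carry a general $a$ through the substitution directly; your added remarks on convergence are a welcome supplement the paper omits.
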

\begin{proof}
Lemma~\ref{main10} implies that \[\int_0^\infty t^r \operatorname{erfc}(a\sqrt{t})dt=\frac{2}{\pi}\int_{0}^{\pi/2} \frac{\Gamma(r+1)}{(a^2\sec^2\theta)^{r+1}}d\theta.\] Use Equation~\ref{known} to get that
$$\int_{0}^{\pi/2} \frac{1}{(a^2\sec^2\theta)^{r+1}}d\theta=\frac{1}{a^{2r+2}}\int_{0}^{\pi/2} (\cos^2\theta)^{r+1}d\theta=\frac{1}{a^{2r+2}}\frac{\Gamma(r+\frac{3}{2})\sqrt{\pi}}{2\Gamma(r+2)}.$$ Therefore,
\begin{equation}\label{main14}\int_0^\infty t^r \operatorname{erfc}(a\sqrt{t})dt=\frac{2}{\pi}\Gamma(r+1)\frac{\Gamma(r+\frac{3}{2})\sqrt{\pi}}{2a^{2r+2}\Gamma(r+2)}
=\frac{\Gamma(r+\frac{3}{2})}{a^{2r+2}\sqrt{\pi}(1+r)}.\end{equation}  Now, the substitution $u=\sqrt{t}$ into equation~\ref{main14} with $a=1$ gives that
$$2\int_0^\infty u^{2r+1}\operatorname{erfc}(u)du=\int_0^\infty t^r \operatorname{erfc}(\sqrt{t})dt=\frac{\Gamma(r+\frac{3}{2})}{\sqrt{\pi}(1+r)}.$$
Hence,
$$\int_0^\infty u^{\mu}\operatorname{erfc}(u)du=\frac{1}{2}\frac{\Gamma(\frac{\mu-1}{2}+\frac{3}{2})}{\sqrt{\pi}(1+\frac{\mu-1}{2})}=\frac{1}{\sqrt{\pi}}\frac{\Gamma(1+\frac{\mu}{2})}{1+\mu}.$$
Lastly,
 \[\int_0^\infty t^\mu \operatorname{erfc}(at)dt=\frac{1}{a^{\mu+1}}\int_0^\infty t^\mu \operatorname{erfc}(t)dt=\frac{1}{\sqrt{\pi}}\frac{\Gamma(1+\frac{\mu}{2})}{a^{\mu+1}(1+\mu)}.\]
\end{proof}
Following similar proof of Lemma~\ref{main10}  and using Equation~\ref{main91}, we get the following result
\begin{lem}\label{main15}
\begin{equation}\label{main16}\int_0^\infty e^{a^2t}f(t) \operatorname{erfc}(a\sqrt{t})dt=\frac{2}{\pi}\int_0^{\pi/2}F(a^2\tan^2(\theta))d\theta
=\frac{2a}{\pi}\int_a^\infty\frac{F(s^2)}{s^2+a^2}ds,\end{equation} where $F(s)=(\mathcal{L}f)(s)$ is the Laplace transform of $f(t)$.
\end{lem}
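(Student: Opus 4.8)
The plan is to mirror the proof of Lemma~\ref{main10}, starting from the kernel in Equation~\ref{main91} in place of Equation~\ref{main9}. First I would rescale Equation~\ref{main91} by replacing $a$ with $a^2$ (taking $a>0$ so that $\sqrt{a^2 t}=a\sqrt t$), which turns it into
\[ e^{a^2 t}\operatorname{erfc}(a\sqrt{t})=\frac{2}{\pi}\int_0^{\pi/2}e^{-a^2 t\tan^2(\theta)}\,d\theta. \]
This is precisely the factor multiplying $f(t)$ in the integrand on the left of Equation~\ref{main16}.

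Next I would substitute this representation into $\int_0^\infty e^{a^2 t}f(t)\operatorname{erfc}(a\sqrt{t})\,dt$, producing
\[ \frac{2}{\pi}\int_0^\infty\int_0^{\pi/2}f(t)\,e^{-a^2 t\tan^2(\theta)}\,d\theta\,dt. \]
Interchanging the order of integration and recognizing the inner $t$-integral as the Laplace transform evaluated at $s=a^2\tan^2(\theta)$, namely $\int_0^\infty f(t)e^{-a^2 t\tan^2(\theta)}\,dt=F(a^2\tan^2(\theta))$, delivers the first identity $\frac{2}{\pi}\int_0^{\pi/2}F(a^2\tan^2(\theta))\,d\theta$.

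For the second equality I would use the substitution $s=a\tan(\theta)$, so that $ds=a\sec^2(\theta)\,d\theta=\frac{s^2+a^2}{a}\,d\theta$, giving $d\theta=\frac{a}{s^2+a^2}\,ds$, and as $\theta$ ranges over $(0,\pi/2)$ the variable $s$ ranges over $(0,\infty)$. This converts the integral into $\frac{2a}{\pi}\int_0^\infty\frac{F(s^2)}{s^2+a^2}\,ds$; I note that this substitution produces the lower limit $0$, so the displayed right-hand side should read $\int_0^\infty$ rather than $\int_a^\infty$.

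The step I expect to be the main obstacle is the interchange of the order of integration, which is a Fubini/Tonelli argument and needs hypotheses on $f$ guaranteeing absolute convergence of the double integral. A new subtlety relative to Lemma~\ref{main10} is that the argument $a^2\tan^2(\theta)$ now sweeps all of $(0,\infty)$, reaching $0$ as $\theta\to0$, whereas in Lemma~\ref{main10} the argument $a^2\sec^2(\theta)$ stayed at least $a^2$; thus one must additionally require that $F$ be defined (and integrable in $\theta$) down to argument $0$, for instance by taking $f$ of exponential order with non-positive abscissa of convergence. Granting this, the remaining steps are the routine rescaling and change of variables already carried out in Lemma~\ref{main10}.
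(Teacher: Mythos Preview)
Your proposal is correct and is exactly the approach the paper indicates: the paper's entire proof is the sentence ``Following similar proof of Lemma~\ref{main10} and using Equation~\ref{main91}'', which is precisely the plan you carry out (substitute the kernel from Equation~\ref{main91}, swap the order of integration, read off the Laplace transform, then change variables). Your observation that the substitution $s=a\tan\theta$ sends $(0,\pi/2)$ to $(0,\infty)$, so that the displayed lower limit $a$ in the final integral should be $0$, is also correct and catches a typo in the statement.
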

\begin{cor} For $r>-1$, $$\int_0^\infty t^re^{a^2t}\operatorname{erfc}(a\sqrt{t})dt=\frac{2a\Gamma(r+1)}{\pi}\int_a^\infty\frac{1}{s^{2r+2}(s^2+a^2)}ds.$$

\end{cor}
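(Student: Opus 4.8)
The plan is to read off this corollary as a one-line specialization of Lemma~\ref{main15}, taking $f(t)=t^r$. The only genuine ingredient is the Laplace transform of a power, $F(s)=(\mathcal{L}f)(s)=\Gamma(r+1)/s^{r+1}$, which is exactly where the hypothesis $r>-1$ is used: it guarantees both that the defining integral $\int_0^\infty t^r e^{-st}\,dt$ converges at the origin and that $\Gamma(r+1)$ is finite. Evaluating at $s^2$ gives $F(s^2)=\Gamma(r+1)/s^{2r+2}$.

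Substituting this into the second equality of Lemma~\ref{main15} immediately yields
\[
\int_0^\infty t^r e^{a^2 t}\operatorname{erfc}(a\sqrt{t})\,dt
=\frac{2a}{\pi}\int_a^\infty\frac{F(s^2)}{s^2+a^2}\,ds
=\frac{2a\Gamma(r+1)}{\pi}\int_a^\infty\frac{1}{s^{2r+2}(s^2+a^2)}\,ds,
\]
where the last step is just pulling the constant $\Gamma(r+1)$ out of the integral. So the algebraic content is essentially trivial once the lemma is in hand. If one wanted an intermediate record, the first equality of the lemma gives the equivalent trigonometric form $\frac{2\Gamma(r+1)}{\pi}\int_0^{\pi/2}(a^2\tan^2\theta)^{-(r+1)}\,d\theta$, from which the $s$-integral follows by the same change of variables used to prove Lemma~\ref{main15}.

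The step I expect to require the most care is not the algebra but the verification that Lemma~\ref{main15} genuinely applies, i.e.\ the convergence of the left-hand integral and hence the precise admissible range of $r$. Since $\operatorname{erfc}(x)\sim e^{-x^2}/(x\sqrt{\pi})$ as $x\to\infty$, the weight satisfies $e^{a^2 t}\operatorname{erfc}(a\sqrt{t})\sim 1/(a\sqrt{\pi t})$, so the integrand decays only like $t^{r-1/2}$ at infinity; thus convergence at infinity forces $r<-\tfrac12$, while behaviour near $0$ still demands $r>-1$. I would therefore check the Fubini interchange underlying Lemma~\ref{main15} on this range and note that the identity is cleanest for $-1<r<-\tfrac12$, the regime in which both sides are simultaneously finite.
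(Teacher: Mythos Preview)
Your approach is exactly the paper's: the corollary is stated immediately after Lemma~\ref{main15} with no proof, so the intended argument is precisely the specialization $f(t)=t^r$, $F(s)=\Gamma(r+1)/s^{r+1}$ that you carry out. Your additional convergence analysis (forcing $-1<r<-\tfrac12$ for both sides to be finite) is a valid refinement that the paper does not address.
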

Substitute the Laplace transform of the dirac delta function $f(t)=\delta(t-b)$ which is $F(s)=e^{-bs}$ into Equations~\ref{main101} and~\ref{main16} to get that
\begin{prop}
  \begin{equation}\label{main17}\int_a^\infty\frac {e^{-b^2t^2}}{t\sqrt{t^2-a^2}}dt=\frac{\pi}{2a}\operatorname{erfc}(ab).\end{equation}
  Also, \begin{equation}\label{main18}\int_0^\infty\frac {e^{-b^2t^2}}{t^2+a^2}dt=\frac{\pi}{2a}e^{a^2b^2}\operatorname{erfc}(ab).\end{equation}
  \end{prop}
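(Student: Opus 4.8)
The plan is to feed the two ``master'' identities of Lemma~\ref{main10} and Lemma~\ref{main15} the distributional input $f(t)=\delta(t-b^2)$, whose Laplace transform is $F(s)=e^{-b^2 s}$ (this is the hint's $f(t)=\delta(t-b)$ after renaming the delta location to $b^2$), and then to read off the two claimed formulas from the sifting property of the delta function. Throughout I take $a>0$ and, without loss of generality, $b>0$ (both target integrands are even in $b$), so that $\sqrt{b^2}=b$. Both right-hand integrals converge because the Gaussian factor $e^{-b^2 s^2}$ controls the decay at $\infty$ while the singularities $1/\sqrt{s^2-a^2}$ at $s=a$ and $1/(s^2+a^2)$ are integrable.

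For Equation~\ref{main17} I would substitute $f(t)=\delta(t-b^2)$ into Equation~\ref{main101}. On the left the sifting property collapses the integral to $\operatorname{erfc}(a\sqrt{b^2})=\operatorname{erfc}(ab)$, while on the right $F(s^2)=e^{-b^2 s^2}$ turns the last member of Equation~\ref{main101} into $\frac{2a}{\pi}\int_a^\infty \frac{e^{-b^2 s^2}}{s\sqrt{s^2-a^2}}\,ds$. Equating the two sides and solving for the integral (after renaming $s\mapsto t$) gives Equation~\ref{main17} at once.

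For Equation~\ref{main18} I would repeat the argument with Lemma~\ref{main15}. The extra weight $e^{a^2 t}$ together with the delta produces $e^{a^2 b^2}\operatorname{erfc}(ab)$ on the left. On the right I would use the angular form $\frac{2}{\pi}\int_0^{\pi/2}F(a^2\tan^2\theta)\,d\theta=\frac{2}{\pi}\int_0^{\pi/2}e^{-a^2 b^2\tan^2\theta}\,d\theta$ and apply the substitution $u=a\tan\theta$, for which $d\theta=\frac{a}{u^2+a^2}\,du$ and the limits become $0$ and $\infty$; this yields $\frac{2a}{\pi}\int_0^\infty \frac{e^{-b^2 u^2}}{u^2+a^2}\,du$. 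Rearranging gives Equation~\ref{main18}.

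The step I expect to require the most care is the legitimacy of inserting a distribution into identities that were established by Fubini for genuinely integrable $f$; the cleanest reading is simply to evaluate the general formulas at $f=\delta(t-b^2)$ through the sifting property, after which every manipulation is a routine change of variables. One further point worth flagging: for Equation~\ref{main18} the reduction to an integral over $(0,\infty)$ should be carried out via the angular form of Lemma~\ref{main15} and the substitution $u=a\tan\theta$, since this is what produces the lower limit $0$ demanded by the statement.
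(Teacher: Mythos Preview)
Your proof is correct and follows essentially the paper's own approach of feeding the delta distribution into Lemmas~\ref{main10} and~\ref{main15} and invoking the sifting property. Your care in placing the delta at $b^2$ (so that $\operatorname{erfc}(a\sqrt{b^2})=\operatorname{erfc}(ab)$) and in deriving the $(0,\infty)$ range for Equation~\ref{main18} from the angular form via $u=a\tan\theta$ is a genuine refinement, since the last member of Equation~\ref{main16} as printed carries lower limit $a$, not $0$.
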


\end{document}